\documentclass[epsf,10pt]{article}
\usepackage{enumerate}
\usepackage{epsfig}
\usepackage{amsmath,amssymb,latexsym}
\usepackage{amsthm}
\usepackage{graphicx} 
\usepackage{tikz}
\usepackage{float}
\newcommand\GG{\mathcal{G}}
\newcommand\NN{\mathbf{N}}

\newcommand{\address}{Mathematics Department, P.O. Box 311430, Denton, TX 76203-1430; e-mail:jonathanleung@my.unt.edu}

\newtheorem{theorem}{Theorem}[section]
\newtheorem{definition}[theorem]{Definition}
\newtheorem{corollary}[theorem]{Corollary}

\newtheorem{lemma}[theorem]{Lemma}

\title{Shifted L\'evy's Dragon Curve and Directed Graph}
\author{Jonathan Leung \\University of North Texas \footnote{\address}}
\date{\today}

\begin{document}
\maketitle

\renewcommand{\thefootnote}{\ }
\begin{abstract}

It is known that every point on L\'evy’s Dragon Curve admits a natural representation as a complex power series. We introduce a directed graph $\GG_1$ which characterizes this representation.
In this paper, we study the translation of the curve by $s=-1/2+i/2$. 
We identify another directed graph $\GG_2$, that characterizes the translated curve and exhibits a revolving structure analogous to that of $\GG_1$. 

\footnote{This paper is based on an undergraduate honors thesis at University of North Texas}.

\end{abstract}

\section{Introduction}

Self-similar fractals are structures that replicate their overall form at every scale, revealing intricate patterns found throughout nature, art, and mathematics. In this paper, we explore a particularly striking example: Lévy’s Dragon Curve—a self-similar curve notable for its ability to tile the complex plane. 

L\'evy's Dragon Curve $L$ (Figure \ref{fig:dragon}) was introduced and studied in 1938 by P. L\'evy \cite{Levy-1938}. It is the unique attractor $L$ in the complex plane satisfying the set equation: $L=\psi_0(L)\cup\psi_1(L)$,
where
\begin{equation}
\label{eq:levy}
\begin{cases}
 \psi_0(z)=(\frac{1-i}{2})z,& \\
 \psi_1(z)=(\frac{1+i}{2})z + \frac{1-i}{2}. &
\end{cases}
\end{equation}

Figure \ref{fig:construction} shows how $L$ can be constructed by similar contractions $\{\psi_0, \psi_1 \}$ from the isosceles triangle $A_0$ with vertices $0,1$, and $\tfrac12-\tfrac12 i$. It is easy to see from this recursive construction that the area of $L$ is the same as that of $A_0$. In fact, L\'evy proved that $L$ is a space-filling curve.

\begin{figure}[H]
  	\begin{center}
	\epsfig{file=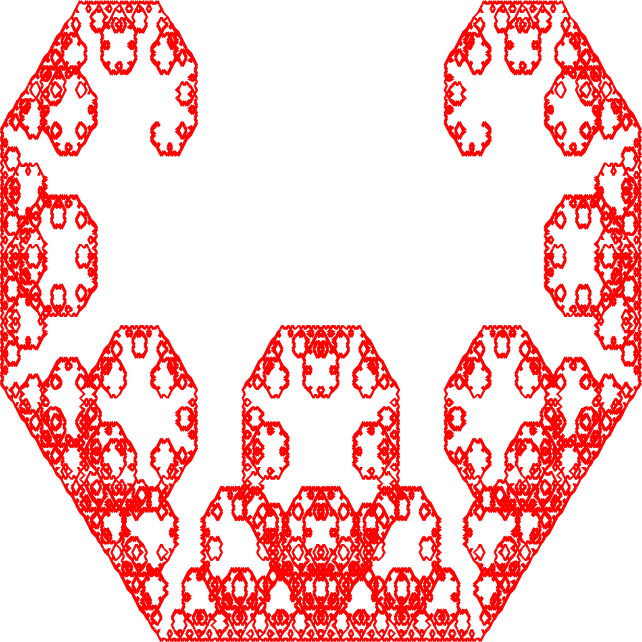, height=1.75in, width=.51\textwidth} 
  \caption{L\'evy's dragon curve}
	\end{center}
\label{fig:dragon}
\end{figure}

\begin{figure}[H]
\begin{center}
\begin{picture}(320,180)(20,-120)
	\put(14,50){\line(1,-1){40}}
	\put(14,50){\line(1,0){80}}
	\put(54,10){\line(1,1){40}}
	\put(34,40){\vector(-1,-1){10}}
	\put(74,40){\vector(1,-1){10}}
	\put(12,30){\makebox(0,0)[tl]{$\psi_0$}}
	\put(86,30){\makebox(0,0)[tl]{$\psi_1$}}	
	\put(49,-10){\makebox(0,0)[tl]{$A_0$}}
	\put(116,50){\line(1,-1){40}}
	\put(156,10){\line(1,1){40}}
	\put(116,10){\line(0,1){40}}
	\put(116,10){\line(1,0){80}}
	\put(196,50){\line(0,-1){40}}
	\put(121,20){\vector(-1,0){10}}
	\put(83,15){\makebox(0,0)[tl]{$\psi_0 \circ \psi_0$}}
	\put(136,15){\vector(-1,-2){5}}
	\put(111,0){\makebox(0,0)[tl]{$\psi_1 \circ \psi_0$}}
	\put(176,15){\vector(1,-2){5}}
	\put(168,0){\makebox(0,0)[tl]{$\psi_0 \circ \psi_1$}}
	\put(191,20){\vector(1,0){10}}
	\put(198,15){\makebox(0,0)[tl]{$\psi_1 \circ \psi_1$}}
	\put(156,-10){\makebox(0,0)[tl]{$A_1$}}
	\put(247,10){\line(0,1){40}}
	\put(247,10){\line(1,0){80}}
	\put(327,50){\line(0,-1){40}}
	\put(287,-10){\makebox(0,0)[tl]{$A_2$}}
	\put(247,10){\line(-1,1){20}}
	\put(227,30){\line(1,1){20}}
	\put(247,10){\line(1,-1){20}}
	\put(267,-10){\line(1,1){20}}
	\put(287,10){\line(1,-1){20}}
	\put(307,-10){\line(1,1){20}}
	\put(327,50){\line(1,-1){20}}
	\put(347,30){\line(-1,-1){20}}
	\put(90,-115){\makebox(0,0)[tl]{$A_3$}}
	\put(35,-60){\line(1,1){20}}
	\put(55,-80){\line(-1,1){20}}
	\put(55,-80){\line(1,-1){20}}
	\put(75,-100){\line(1,1){20}}
	\put(95,-80){\line(1,-1){20}}
	\put(115,-100){\line(1,1){20}}
	\put(135,-40){\line(1,-1){20}}
	\put(155,-60){\line(-1,-1){20}}
	\put(55,-80){\line(-1,0){20}}
	\put(35,-80){\line(0,1){40}}
	\put(35,-40){\line(1,0){20}}
	\put(55,-100){\line(0,1){20}}
	\put(55,-100){\line(1,0){80}}
	\put(95,-80){\line(0,-1){20}}
	\put(135,-80){\line(0,-1){20}}
	\put(135,-80){\line(1,0){20}}
	\put(155,-40){\line(0,-1){40}}
	\put(155,-40){\line(-1,0){20}}
	\put(260,-115){\makebox(0,0)[tl]{$A_4$}}
	\put(225,-40){\line(-1,0){20}}
	\put(205,-80){\line(0,1){40}}
	\put(205,-80){\line(1,0){20}}
	\put(225,-100){\line(0,1){20}}
	\put(225,-100){\line(1,0){80}}
	\put(305,-80){\line(0,-1){20}}
	\put(305,-80){\line(1,0){20}}
	\put(325,-40){\line(0,-1){40}}
	\put(325,-40){\line(-1,0){20}}
	\put(225,-80){\line(-1,-1){10}}
	\put(215,-90){\line(-1,1){10}}
	\put(205,-80){\line(-1,1){10}}
	\put(195,-50){\line(1,1){10}}
	\put(205,-60){\line(-1,1){10}}
	\put(195,-70){\line(1,1){10}}
	\put(205,-40){\line(1,1){10}}
	\put(215,-90){\line(1,-1){10}}
	\put(215,-30){\line(1,-1){10}}
	\put(225,-100){\line(1,-1){10}}
	\put(235,-110){\line(1,1){10}}
	\put(245,-100){\line(1,-1){10}}
	\put(255,-110){\line(1,1){10}}
	\put(265,-100){\line(1,-1){10}}
	\put(265,-100){\line(0,1){20}}
	\put(265,-100){\line(1,1){10}}
	\put(275,-90){\line(-1,1){10}}
	\put(265,-80){\line(-1,-1){10}}
	\put(255,-90){\line(1,-1){10}}
	\put(275,-110){\line(1,1){10}}
	\put(285,-100){\line(1,-1){10}}
	\put(295,-110){\line(1,1){30}}
	\put(305,-40){\line(1,1){10}}
	\put(325,-60){\line(1,-1){10}}
	\put(335,-70){\line(-1,-1){10}}
	\put(325,-40){\line(1,-1){10}}
	\put(335,-50){\line(-1,-1){10}}
	\put(315,-30){\line(1,-1){10}}
	\put(315,-90){\line(-1,1){10}}
	\end{picture}
\caption{The first five steps of the construction of L\'evy's dragon curve}
\label{fig:construction}
\end{center}
\end{figure}

A natural question that arises is whether there are more ways to express L\'evy's Dragon Curve $L$; more specifically, does an explicit representation exist? In fact, any point $z$ of $L$ can be represented by at least one infinite sequence $(x_i)_{i=1}^{\infty} \in \{0,1\}^{\NN}$ such that 
\begin{equation*}
    z= \lim_{n \to \infty}\psi_{x_1} \circ \psi_{x_2} \circ \dots \circ \psi_{x_n} (0).
\end{equation*}


In 2002, approaching the curve $L$ thought the lens of functional equation, Kawamura~\cite{Kawamura-2002} showed that any point $z$ of $L$ can be expressed as a complex power series. That is, 
\begin{equation*}
z= \sum_{n=1}^{\infty} \xi_{n}(1+i)^{-n}, 
\end{equation*}
where the infinite sequence $(\xi_n)_{n=1}^{\infty} \in \{0,1,i,-1,-i\}^{\NN}$ with the restriction that all non-zero values of $\xi_n$ must follow the unit circle in a counter clockwise fashion, starting at 1. In other words the non-zero digits follow Figure~\ref{fig:revdig}.
\begin{figure}[H]
    \begin{center}
\begin{tikzpicture}[scale=2]
    \draw[->] (-1.1,0) -- (1.1,0);
    \draw[->] (0,-1.1) -- (0,1.1);
    \draw [opacity=0.3](0,0) circle (1);
    \foreach \angle in {37.5, 127.5, 217.5, 307.5} {
        \draw[->,line width=1pt] (\angle:1) arc (\angle:\angle+15:1);
    }
    \node at (1.2,0) {$1$};
    \node at (-1.2,0) {$-1$};
    \node at (0,1.2) {$i$};
    \node at (0,-1.2) {$-i$};
\end{tikzpicture}
\end{center} 
    \caption{The unit circle in the complex plane with rotation $\theta=\frac{\pi}{2}$}
    \label{fig:revdig}
\end{figure}

So we have shown that L\'evy's Dragon Curve can be expressed in a variety of different ways, each of which provide insight into the behavior of the curve. Then following question naturally arises.\medskip

Can any translation of Le\'vy's Dragon Curve be expressed in similar manner to the original Le\'vy's Dragon Curve? 
\medskip

In this paper, as the first step, we investigate the translation of L\'evy's Dragon Curve by $s=-1/2+i/2$.

\section{Self-similar Sets and Functional Equations}

The history of systematic mathematical research on self-similar sets dates back to 1981, when Hutchinson considered the non-empty compact set $X \subset \mathbb{R}^{n}$ satisfying the following set equation.
\begin{equation}
 \label{eq:self-similar}
X= \varphi_{0}(X) \cup \varphi_{1}(X) \cup \dots \cup \varphi_{m-1}(X),
\end{equation}
where $\varphi_{0},\varphi_{1},\dots,\varphi_{m-1}$ are similarity  contractions on $\mathbb{R}^{n}$. 

(Recall that a map $\psi:\mathbb{R}^{n} \rightarrow \mathbb{R}^{n}$ is a {\it similarity contraction} iff there exists a constant number $L(\varphi) \in (0,1)$ so that the equality $\|\varphi(x)-\varphi(y)\|=L(\varphi)\|x-y\|$ holds for any $x,y \in \mathbb{R}^{n}$).

Hutchinson~\cite{Hutchinson-1981} proved the following important theorem. 
\begin{theorem}[Hutchinson (1981)]
\label{th:hutchinson}
For any finite family of similarity contractions $\{\varphi_0, \varphi_1, ..., \varphi_{m-1}\}$ in $\mathbb{R}^{n}$, which is called an iterated function system (or IFS), there exists a unique non-empty compact solution $X$ of \eqref{eq:self-similar} in $ \mathbb{R}^{n}$.
\end{theorem}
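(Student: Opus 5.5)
The plan is the standard contraction-mapping argument on the space of compact sets. Let $\mathcal{K}$ denote the family of non-empty compact subsets of $\mathbb{R}^n$, equipped with the Hausdorff metric
\[
d_H(A,B)=\max\Bigl\{\sup_{a\in A}\inf_{b\in B}\|a-b\|,\ \sup_{b\in B}\inf_{a\in A}\|a-b\|\Bigr\}.
\]
On $\mathcal{K}$ define the Hutchinson operator $\Phi(A)=\varphi_0(A)\cup\cdots\cup\varphi_{m-1}(A)$. Each $\varphi_j$ is continuous, so each $\varphi_j(A)$ is compact, and a finite union of compact sets is compact. Hence $\Phi$ maps $\mathcal{K}$ into itself. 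The non-empty compact solutions of \eqref{eq:self-similar} are exactly the fixed points of $\Phi$, so the theorem reduces to the Banach fixed point theorem once two facts are established: $(\mathcal{K},d_H)$ is complete, and $\Phi$ is a strict contraction.

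For the contraction, I will first check two elementary estimates: $d_H(\varphi(A),\varphi(B))= L(\varphi)\,d_H(A,B)$ for a similarity $\varphi$, and $d_H(A_1\cup A_2,B_1\cup B_2)\le\max\{d_H(A_1,B_1),d_H(A_2,B_2)\}$. The second follows directly from the $\varepsilon$-neighbourhood description of $d_H$: $d_H(A,B)\le\varepsilon$ iff each set lies in the closed $\varepsilon$-neighbourhood of the other. Combining the two estimates gives
\[
d_H(\Phi(A),\Phi(B))\le r\,d_H(A,B),\qquad r=\max_j L(\varphi_j)<1 .
\]

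The main obstacle is completeness of $(\mathcal{K},d_H)$. Let $(A_k)$ be a Cauchy sequence in $\mathcal{K}$. I will take as candidate limit
\[
A=\bigcap_{k}\overline{\bigcup_{j\ge k}A_j}.
\]
\textbf{Boundedness.} Since the sequence is Cauchy, all $A_j$ lie in a common ball, so each closure $\overline{\bigcup_{j\ge k}A_j}$ is compact. Then $A$ is a decreasing intersection of non-empty compact sets, hence non-empty and compact.

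\textbf{Convergence.} Fix $\varepsilon>0$ and choose $N$ with $d_H(A_j,A_k)<\varepsilon$ for $j,k\ge N$. Then $\bigcup_{j\ge k}A_j$ lies in the $\varepsilon$-neighbourhood of $A_k$, which gives $A\subset (A_k)_\varepsilon$. For the reverse inclusion, take $x\in A_k$. Choose indices $k=n_0<n_1<\cdots$ with Cauchy tolerances $\varepsilon/2^{i}$, and pick points $x_i\in A_{n_i}$ with $\|x_{i+1}-x_i\|<\varepsilon/2^i$. This sequence is Cauchy, so it converges to some point $y$. By construction $y\in A$ and $\|x-y\|<2\varepsilon$. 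Hence $d_H(A_k,A)\le 2\varepsilon$ for all large $k$, so $A_k\to A$.

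Finally, the Banach fixed point theorem gives a unique $X\in\mathcal{K}$ with $\Phi(X)=X$, which is exactly the claimed unique non-empty compact solution of \eqref{eq:self-similar}. As a by-product, $\Phi^k(A)\to X$ for any starting set $A\in\mathcal{K}$. This justifies the recursive construction from $A_0$ shown in Figure~\ref{fig:construction}.
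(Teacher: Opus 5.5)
The paper gives no proof of this theorem; it is quoted as background from Hutchinson (1981). Your argument is the standard proof, essentially Hutchinson's own: the Banach fixed point theorem applied to the Hutchinson operator on $(\mathcal{K},d_H)$. It is correct as written; the only implicit points are that $k\ge N$ in the convergence step, and that $n_i\to\infty$ is what places $y$ in every $\overline{\bigcup_{j\ge m}A_j}$, and both are immediate.
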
 

We call $X$ an attractor or a self-similar set for a given IFS. 

It is well-known that any point of $X$ can be represented by at least one coding sequence $(x_i)_{i=1}^{\infty}$ such that 
\begin{equation*}
X= \left\{\lim_{n \to \infty} \varphi_{x_1} \circ \varphi_{x_2} \circ \cdots \varphi_{x_n} (0): (x_i)_{i=1}^{\infty} \in \{0,1,2, \cdots (m-1)\}^{\mathbb{N}}\right\}
\end{equation*}

In this paper, we focus on a self-similar set $X$ determined by IFS with 2 similarity contractions $\{\psi_0,\psi_1\}$.

\bigskip

In 1957, G. ~de Rham studied the following general functional equation and showed the following result.  

\begin{equation}
 \label{func: gene}
        f(x)=
        \begin{cases}
                 \varphi_{0}(f(2x)),
                        & \qquad 0 \leq x < 1/2, \\
                 \varphi_{1}(f(2x-1)),
                        & \qquad 1/2 \leq x \leq 1, \\
        \end{cases}
\end{equation}
where $\varphi_{0}, \varphi_{1}$ are contractions on $\mathbb{R}^2$.

\begin{theorem}[de Rham (1957)]
There exists a unique continuous solution $f(x)$ of \eqref{func: gene} if and only if 
$$\varphi_{1}(Fix (\varphi_0)) = \varphi_{0}(Fix(\varphi_1)),$$ 
where $Fix (\varphi_0), Fix (\varphi_1)$ are the unique fixed points of $\varphi_{0}$ and $\varphi_{1}$, respectively. We call $x$ a fixed point of $\varphi$ if $\varphi(x)=x$, and we denote this as $x= Fix(\varphi)$.
\end{theorem}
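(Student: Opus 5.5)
We plan to build the solution as a uniform limit and to reduce uniqueness to the Banach fixed point theorem. Let $C$ be the complete metric space of continuous maps $g:[0,1]\to\mathbb{R}^2$ with the sup metric. Consider the closed subset
$$C_{p,q}=\{g\in C: g(0)=p,\ g(1)=q\},\qquad p=Fix(\varphi_0),\ q=Fix(\varphi_1).$$
On it define the operator $T$ by
$$(Tg)(x)=\varphi_0(g(2x)) \text{ for } 0\le x\le 1/2, \qquad (Tg)(x)=\varphi_1(g(2x-1)) \text{ for } 1/2\le x\le 1.$$
A continuous solution of \eqref{func: gene} is exactly a fixed point of $T$ in $C$.

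\emph{Necessity.} Suppose $f$ is a continuous solution. Setting $x=0$ gives $f(0)=\varphi_0(f(0))$, so $f(0)=p$. Letting $x\to 1$ (and using $x=1$ directly) gives $f(1)=\varphi_1(f(1))$, so $f(1)=q$. Continuity at $x=1/2$ then gives
$$\lim_{x\to 1/2^-}\varphi_0(f(2x))=\varphi_0(f(1))=\varphi_0(q),$$
while the value at $1/2$ is $\varphi_1(f(0))=\varphi_1(p)$. Hence $\varphi_1(Fix(\varphi_0))=\varphi_0(Fix(\varphi_1))$.

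\emph{Sufficiency.} Assume the compatibility condition $\varphi_0(q)=\varphi_1(p)$.

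\begin{enumerate}
\item $T$ maps $C_{p,q}$ into itself. The two halves of $Tg$ agree at $x=1/2$ precisely because $\varphi_0(g(1))=\varphi_0(q)=\varphi_1(p)=\varphi_1(g(0))$, so $Tg$ is continuous. Its endpoint values are $\varphi_0(p)=p$ and $\varphi_1(q)=q$.
\item $T$ is a contraction. Let $c=\max(L(\varphi_0),L(\varphi_1))<1$. Then $\|Tg-Th\|_\infty\le c\|g-h\|_\infty$.
\item Banach's fixed point theorem gives a unique fixed point $f\in C_{p,q}$, and $f$ is a continuous solution.
\end{enumerate}

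\emph{Uniqueness among all continuous solutions.} This follows because the necessity argument shows that any continuous solution automatically lies in $C_{p,q}$.

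The main subtlety is the well-definedness at $x=1/2$, where the equation itself prescribes the second branch, together with the requirement that $T$ preserve continuity. This is exactly where the compatibility condition enters. The step that needs care is showing that the endpoints are forced to be the fixed points; once that is established, the rest is the standard contraction argument.

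A technical point remains. For a general contraction (not necessarily a similarity) the same estimate holds with Lipschitz constants below $1$. If a contraction only satisfies $\|\varphi(x)-\varphi(y)\|<\|x-y\|$, we would instead iterate $T$ on a bounded invariant set and use compactness; we expect to assume the Lipschitz version, as the paper's setting of similarity contractions provides.
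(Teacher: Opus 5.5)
The paper does not prove this theorem. It quotes it from de Rham (1957) and uses it only as a black box: it checks the compatibility condition $\varphi_1(Fix(\varphi_0))=\varphi_0(Fix(\varphi_1))$ to get existence and uniqueness of the continuous solutions of \eqref{eq:deLevy} and \eqref{eq:s-functional}. So there is no argument to compare yours against, and your proposal has to stand on its own. It does.

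Your necessity part actually proves more than is asked: the existence of \emph{any} continuous solution forces the condition. Evaluating at $x=0$ and $x=1$ gives $f(0)=p$ and $f(1)=q$. Comparing the left limit $\varphi_0(f(1))$ at $x=1/2$ with the value $\varphi_1(f(0))$ prescribed there then gives $\varphi_0(q)=\varphi_1(p)$.

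Your sufficiency part is the standard contraction-mapping argument, essentially de Rham's own construction. You correctly locate the one place the hypothesis enters, namely continuity of $Tg$ at $x=1/2$. Your last step also correctly upgrades uniqueness in $C_{p,q}$ to uniqueness among all continuous solutions, since every continuous solution lies in $C_{p,q}$ and is fixed by $T$.

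What your argument adds beyond the citation is that it is constructive. Start from the segment $g_0(x)=(1-x)p+xq$. For the L\'evy system, $Tg_0$ is the path $0\to\frac{1-i}{2}\to 1$. More generally, $T^n g_0$ is the concatenation, in lexicographic order of $w\in\{0,1\}^n$, of the segments $\psi_{w_1}\circ\cdots\circ\psi_{w_n}([0,1])$. These are the polygonal paths underlying the approximations $A_n$ of Figure~\ref{fig:construction}, and they converge uniformly to $f$ at the geometric rate $(1/\sqrt{2})^n$.

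Two small repairs are needed.
\begin{itemize}
\item $T$ is only well defined on $C_{p,q}$, since for a general $g\in C$ the two branches can disagree at $x=1/2$. Your sentence about fixed points of $T$ ``in $C$'' should therefore refer to $C_{p,q}$.
\item Banach's theorem requires $C_{p,q}\neq\emptyset$, and you should say so; $g_0$ is a witness.
\end{itemize}
Your closing remark about merely strict (non-Lipschitz) contractions is not needed here, because the paper's maps are similarities with ratio in $(0,1)$. The compactness fallback you suggest would also not work as stated, since bounded subsets of $C$ are not compact.
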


Recall that L\'evy's Dragon Curve $L$ is a self-similar set on $\mathbb{C}$, since $L$ is a unique attractor generated by \eqref{eq:levy}.

Consider the following functional equation:
\begin{equation}
    \label{eq:deLevy}
        f(x)=
        \begin{cases}
                \alpha f(2x),
                        & \qquad 0 \leq x < 1/2, \\
                (1-\alpha) f(2x-1)+\alpha,
                        & \qquad 1/2 \leq x \leq 1,
       \end{cases}
\end{equation}
where $\alpha=(1-i)/2$.

Notice that \eqref{eq:deLevy} is a special case of de Rham's functional equation \eqref{func: gene} so then by applying de Rham's theorem we have the following corollary.

\begin{corollary}
There exists a unique continuous solution $f(x)$ of \eqref{eq:deLevy}.
\end{corollary}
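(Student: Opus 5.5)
We will apply de Rham's theorem with $\varphi_0(z)=\alpha z$ and $\varphi_1(z)=(1-\alpha)z+\alpha$, where $\alpha=(1-i)/2$, viewed as maps of $\mathbb{C}\cong\mathbb{R}^2$. Then \eqref{eq:deLevy} is literally \eqref{func: gene}: on $[0,1/2)$ the right-hand side is $\varphi_0(f(2x))$, and on $[1/2,1]$ it is $\varphi_1(f(2x-1))$.

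First we check that both maps are contractions, which is needed for de Rham's theorem to apply. Since $|\alpha|=|1-\alpha|=|(1+i)/2|=1/\sqrt2<1$, each $\varphi_j$ is a similarity with ratio $1/\sqrt2$, hence a contraction. So each has a unique fixed point.

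Next we compute these fixed points. From $\alpha z=z$ we get $\mathrm{Fix}(\varphi_0)=0$. From $(1-\alpha)z+\alpha=z$ we get $\alpha z=\alpha$, so $\mathrm{Fix}(\varphi_1)=1$.

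Finally we verify the compatibility condition. We have $\varphi_1(0)=\alpha$ and $\varphi_0(1)=\alpha$, so $\varphi_1(\mathrm{Fix}(\varphi_0))=\varphi_0(\mathrm{Fix}(\varphi_1))$. By de Rham's theorem, \eqref{eq:deLevy} therefore has a unique continuous solution.

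There is no real obstacle in this argument. The only point of care is matching \eqref{eq:deLevy} exactly to the form \eqref{func: gene}, including the choice of $\varphi_1$ with constant term $\alpha$. It is worth noting that this $\varphi_1$ coincides with $\psi_1$ in \eqref{eq:levy}, since $1-\alpha=(1+i)/2$, and likewise $\varphi_0=\psi_0$.
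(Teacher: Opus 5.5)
Your proof is correct and is essentially the paper's argument: you take $\varphi_0(z)=\alpha z$ and $\varphi_1(z)=(1-\alpha)z+\alpha$, find the fixed points $0$ and $1$, and check $\varphi_1(0)=\alpha=\varphi_0(1)$ before applying de Rham's theorem. Beyond the paper, you also verify explicitly that both maps are contractions with ratio $1/\sqrt{2}$, and you phrase the argument in terms of maps rather than the paper's looser notation, which makes it slightly cleaner.
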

\begin{proof}
Let $\psi_0 = \alpha f(2x)$ and $\psi_1 =(1-\alpha) f(2x-1)+\alpha$, with $\alpha = (1-i)/2$. Observe that $Fix (\psi_0)\text{ and } Fix (\psi_1)$ are 0 and 1 respectively. Then 
\begin{align*}
    \psi_{1}(Fix (\psi_0)) = \psi_1(0)=\alpha \\
    \psi_{0}(Fix(\psi_1))=\psi_0(1) =\alpha
\end{align*}

Therefore, by Theorem 2.2 there exists an unique continuous solution of \eqref{eq:deLevy}.
\end{proof}

Observe that the unique continuous solution of \eqref{eq:deLevy} is a complex-valued function $f: [0,1]\to \mathbb{C}$ and the image $f([0,1])$ is L\'evy's dragon curve $L$. In other words, $\{f(x);0\leq x \leq 1\}$ is a parametrization of L\'evy's dragon curve $L$.

\begin{figure}[H]
  	\begin{center}
	\epsfig{file=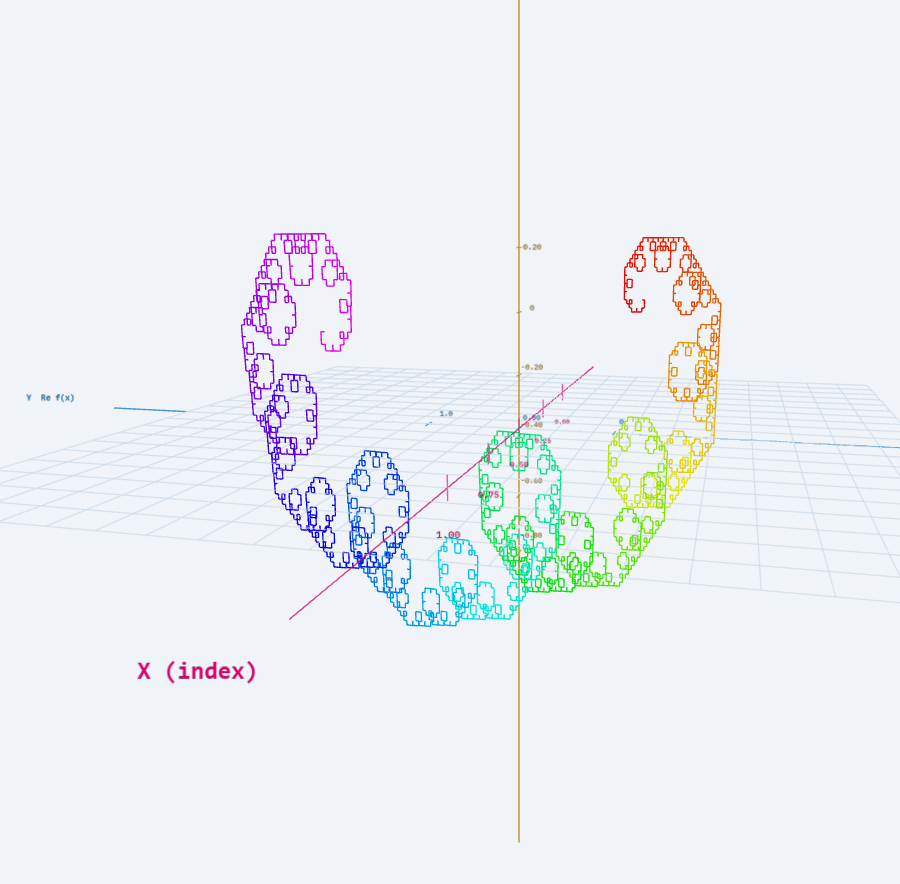, width=.65\textwidth} 
	\end{center}
     \caption{The graph of the complex-valued function $f(x)$. The color corresponds to the x-axis, which is the index set [0,1], with pink at 0, and red at 1. The y-axis and z-axis, are the real and imaginary components of $f(x)$ respectively.}
    \label{fig:3D-fractal}
\end{figure}
\section{Revolving Representation of L\'evy's Dragon Curve}

First, we define the binary expansion of $x \in [0,1]$ as follows. 
\begin{definition}

The binary expansion of $x \in [0,1]$ is denoted by  
\begin{equation}
    x= \sum_{n=1}^{\infty}\omega_n2^{-n}=(0.\omega_1 \omega_2 \cdots )_{2} \qquad \omega_n = \omega_n(x) \in \{0,1\}.
\end{equation}

For $x\in [0,1]$ with two binary expansions, choose the expansion with trailing zeros. However, if $x=1$, fix $\omega_n(x) =1$ for every $n$.

Let $q(x,n) = \sum_{k=1}^n \omega_k$. Equivalently, $q(x,n)$ is the number of $1$'s occurring in the first $n$ binary digits of $x$. By convention, $q(x,0)=0$.

\end{definition}

In 2002, Kawamura studied the functional equation \eqref{eq:deLevy} where $\alpha$ is a complex parameter satisfying $|\alpha|<1$, and gave the following explicit formula for the continuous solution of \eqref{eq:deLevy}.

\begin{theorem}[Kawamura, 2002~\cite{Kawamura-2002}]
The unique continuous solution of \eqref{eq:deLevy} has the following expression.
\begin{equation}
    \label{kiko-levy}
    f(x) = \sum_{n=1}^{\infty} \omega_n(x)\alpha^{n-q(x,n-1)}(1-\alpha)^{q(x,n-1)}, \qquad 0 \leq x \leq 1.
\end{equation}
\end{theorem}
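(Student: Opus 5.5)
Write $F(x)$ for the right-hand side of \eqref{kiko-levy}. The plan is to show directly that $F$ is a continuous solution of \eqref{eq:deLevy}; Corollary 2.3 then gives $F=f$ by uniqueness. Convergence is not an issue. For $|\alpha|<1$ and $|1-\alpha|<1$ (both equal $1/\sqrt2$ here), the $n$-th term has modulus at most $\max(|\alpha|,|1-\alpha|)^{n}$ up to a constant factor. So the series converges absolutely and uniformly in the digit sequence.

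\textbf{Step 1: the functional equation.} Take $0\le x<1/2$. Then $\omega_1(x)=0$ and $\omega_n(x)=\omega_{n-1}(2x)$, and this shift respects the trailing-zeros convention. Hence $q(x,n-1)=q(2x,n-2)$ for $n\ge2$. The $n=1$ term vanishes, and reindexing gives
\[
F(x)=\sum_{m\ge1}\omega_m(2x)\,\alpha^{m+1-q(2x,m-1)}(1-\alpha)^{q(2x,m-1)}=\alpha F(2x).
\]
Now take $1/2\le x<1$. Then $\omega_1=1$ and $\omega_n(x)=\omega_{n-1}(2x-1)$, so $q(x,n-1)=1+q(2x-1,n-2)$. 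The $n=1$ term equals $\alpha$. Each remaining term gains a factor $\alpha^{0}(1-\alpha)^{1}$, because the exponent of $\alpha$ is $n-q(x,n-1)=(n-1)-q(2x-1,n-2)$. Hence $F(x)=\alpha+(1-\alpha)F(2x-1)$.

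The case $x=1$ is handled separately. All digits equal $1$ and $q(1,n-1)=n-1$, so
\[
F(1)=\sum_{n\ge1}\alpha(1-\alpha)^{n-1}=\frac{\alpha}{1-(1-\alpha)}=1.
\]
The equation at $x=1$ requires $F(1)=(1-\alpha)F(1)+\alpha$, which holds. At $x=1/2$ we have $2x-1=0$ and $F(0)=0$, so the formula again holds.

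\textbf{Step 2: continuity, the main obstacle.} $F$ is defined through binary digits, so continuity at dyadic rationals is the real issue; at non-dyadic points it follows from uniform convergence, since nearby points share long digit prefixes. Let $x=(0.\omega_1\cdots\omega_k 1 0 0\cdots)_2$. The left limit corresponds to the alternative expansion $\omega_1\cdots\omega_k 0 111\cdots$. Both expansions share the prefix $w=\omega_1\cdots\omega_k$. By the computation in Step 1, the digit sequence beginning with $w$ contributes $\Phi_w(\cdot)$, where $\Phi_w$ is the composition of the affine maps $z\mapsto \alpha z$ and $z\mapsto(1-\alpha)z+\alpha$ indexed by $w$. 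So it suffices to compare the tails. The tail $1000\cdots$ gives $\psi_1(F(0))=\psi_1(0)=\alpha$. The tail $0111\cdots$ gives $\psi_0(F(1))=\alpha\cdot1=\alpha$. These agree; this is exactly de Rham's compatibility condition $\psi_1(\mathrm{Fix}\,\psi_0)=\psi_0(\mathrm{Fix}\,\psi_1)$.

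To make this rigorous, I would define $G$ on all of $\{0,1\}^{\NN}$ by the same series. $G$ is continuous in the product topology because the series converges uniformly. The step above shows that $G$ takes equal values on the two codings of each dyadic rational. Therefore $G$ descends to a continuous function on $[0,1]$ under the quotient map $(\omega_n)\mapsto\sum\omega_n2^{-n}$, which is a closed continuous surjection from a compact space to a Hausdorff space. This descended function is $F$. Finally, Corollary 2.3 identifies $F$ with $f$.
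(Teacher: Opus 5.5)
Your proof is correct and complete. The paper does not prove this statement; it quotes it from Kawamura. So the natural comparison is with the paper's proof of its analogue, Theorem~\ref{Main-Result} for $G$.

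There the formula is first established at dyadic points by induction on the position $k$ of the last digit $1$, matching the finite sum against the known solution through the functional equation. It is then extended to all of $[0,1]$ by density of the dyadic rationals and continuity of the de Rham solution.

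You go the other way: you show directly that the series is a continuous solution and appeal only to uniqueness. Because your series is defined for every digit sequence, the shift identities $G(0\sigma)=\alpha G(\sigma)$ and $G(1\sigma)=\alpha+(1-\alpha)G(\sigma)$ hold for all $x$ at once, so no induction is needed. The price is that you must prove continuity yourself. The only nontrivial point, agreement on the two codings of a dyadic rational, reduces to $\psi_1(0)=\psi_0(1)=\alpha$. In effect you re-derive de Rham's existence result in this special case.

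The paper's route gets continuity for free from de Rham's theorem. However, its density step tacitly needs the series at a non-dyadic $x$ to equal the limit of its values at the dyadic truncations of $x$. That is exactly the uniform tail bound you state at the outset, so your argument makes explicit a step the paper leaves implicit.

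A minor point: your separate check at $x=1/2$ is redundant, since that point is already covered by the case $1/2\le x<1$.
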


Kawamura also showed that if $\alpha=(1-i)/2$, $f(x)$ can be expressed as a complex power series restricted by a special \textit{revolving condition}. That is, if $\alpha=(1-i)/2$, then $\alpha^{-1}=(1+i)$ and 
\begin{equation}
\label{kiko-levy-2}
    f(x) = \sum_{n=1}^{\infty} \omega_n(x)\alpha^n(i)^{q(x,n-1)}=\sum_{n=1}^{\infty}\omega_n(x)(1+i)^{-n}(i)^{q(x,n-1)}
    \qquad 0 \leq x \leq 1.
\end{equation}

Let $\xi_n:=\omega_n(i)^{q(x,n-1)}$. Observe that $\xi_n \in \{0, 1,-1, i,-i\}$ with the restriction that the non-zero values must follow the cyclic pattern from left to right: 
$$1  \to i \to (-1) \to (-i) \to 1 \to \cdots.$$
The sequence $(\xi_n)$ varies depending on $x \in [0,1]$. Figure~\ref{fig:map1} shows the directed-graph $\GG_1$ of $(\xi_n)$. In Figure~\ref{fig:revdig} there is no characterization of the 0 digit, however it maintains a memory function in which the number preceding the 0 restricts the following digit. To describe this difference we create the following directed graph $\GG_1$:

\begin{figure}[H]
  	\begin{center}
	\epsfig{file=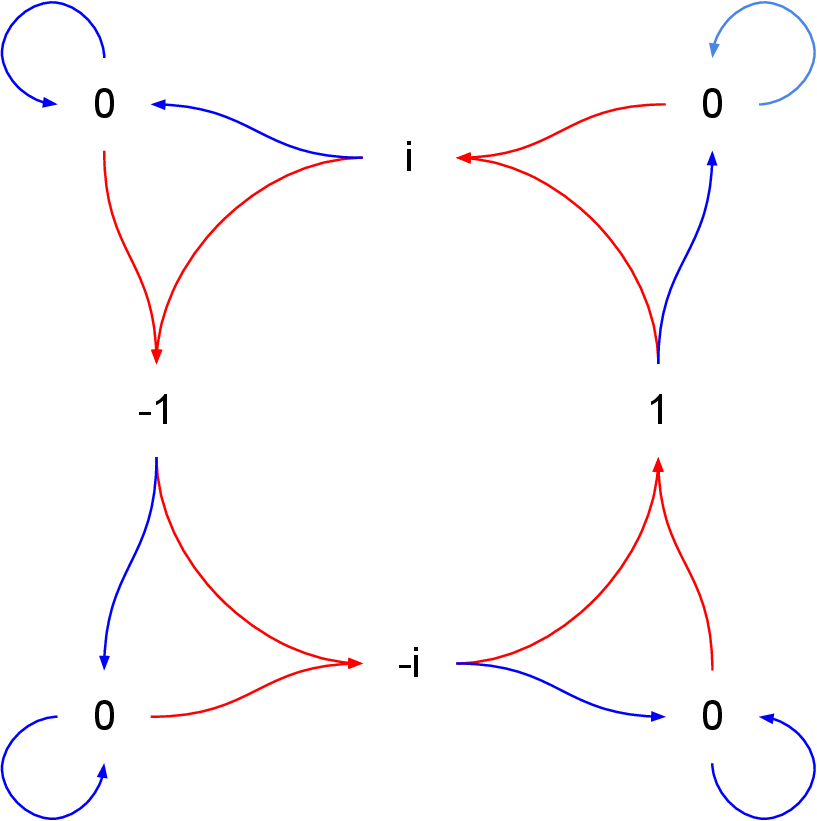, height=1.75in, width=.51\textwidth} 
	\end{center}
    \caption{The directed-graph $\GG_1$, that determines the points of the L\'evy's Dragon Curve $L$}
\label{fig:map1}
\end{figure}

\bigskip

In 2021, Kawamura and Allen~\cite{Kawamura-Allen-2021} introduced the definition of \textit{Generalized Revolving Condition} as follows.

Let $\theta$ be an angle with $-\pi < \theta \leq \pi$ and a rational multiple of $2 \pi$. More precisely, there are $p \in \NN, q \in \NN_{0}$ such that $|\theta|=\frac{2 \pi q}{p}$. 

Define 
$$\Delta_{\theta}:= \{0, 1, e^{i \theta}, e^{2i \theta}, \cdots e^{(p-1)i \theta}\}.$$

\begin{definition}
A sequence $(\delta_{1},\delta_{2},\dots)\in\Delta_{\theta}^{\NN}$ satisfies the {\em Generalized Revolving Condition (GRC)}, if the  subsequence obtained after the removal of its zero elements is a (finite or infinite) truncation of the sequence $(e^{i \theta})$. More precisely, let $(n_i):=\{n:\delta_n \neq 0\}$. Then, 
$\delta_{n_{i+1}}=e^{i \theta} \delta_{n_i}.$
\end{definition}

Notice that $\delta_n$ moves on the unit circle counterclockwise if $\theta >0$, and clockwise if $\theta < 0$. 
\bigskip

Fix $\theta = \frac{\pi}{2}$ so that $\Delta= \{0,1,i,-1,-i\}$.
Observe that any sequence $(\xi_n)\in \Delta^{\NN}_{\frac{\pi}{2}}$ satisfies the GRC. Since $L=f([0,1])$, we have 
\begin{equation} 
    \label{L-revolving}
  L=\left\{ \sum_{n=1}^{\infty} \xi_{n}\alpha^{n}: (\xi_n) \mbox{ follows the directed-graph $\GG_1$} \right\}, 
\end{equation}
where $\alpha=(1-i)/2.$ That is, any point $z$ of L is characterized by a sequence $(\xi_i)_{i=1}^{\infty} $ satisfying the GRC with $\theta=\frac{\pi}{2}$.

\section{Main Results}

We investigate the translation of $L$ by $s=-1/2+i/2$ and call this the s-shifted L\'evy's Dragon Curve: $L_s$. $L_s$ inherits the self-similarity of $L$, so it is also the attractor of an IFS $\{\phi_0, \phi_1\}$. Let $S(z)=z-\frac{1-i}{2}$ on $\mathbb{C}$.
We obtain these functions by modifying $\psi_i$ in the following way:
\begin{equation*}
\phi_i = S \circ \psi_i \circ S^{-1}(z)
\end{equation*}
Thus, $L_s$ is the unique attractor $L_s=\phi_0(L_s)\cup\phi_1(L_s)$, which is  generalized by the following modified IFS: 
\begin{equation}
\label{eq:s-levy}
\begin{cases}
 \phi_0(z)=(\frac{1-i}{2})(z-s)+s = (\frac{1-i}{2}) z-\frac{1}{2},& \\
 \phi_1(z)=(\frac{1+i}{2})(z-s) + (\frac{1-i}{2})+s = (\frac{1+i}{2}) z +\frac{1}{2}&
\end{cases}
\end{equation}

Then a natural question arises: Can any point $z$ of $L_s$ be expressed as a complex power series with a similar sequence satisfying the GRC?

\medskip
Consider the following functional equation
\begin{equation}
\label{eq:s-functional}
 G(x) =
\begin{cases}
 \alpha G(2x) - \frac{1}{2}
                     &\qquad 0 \leq x < \frac{1}{2}, \\
(1-\alpha) G(2x-1) + \frac{1}{2} & \qquad \frac{1}{2} \leq x \leq 1,\\
 
 \end{cases}
 \end{equation}
where $\alpha=(1-i)/2$.
Notice that \eqref{eq:s-functional} is similar, but not equivalent to \eqref{eq:deLevy}. It is also another special case of de Rham's functional equation \eqref{func: gene}. Therefore, using Theorem 2.2, we have 
 \begin{lemma}
There exists a unique continuous solution $G(x)$ of \eqref{eq:s-functional}.
\end{lemma}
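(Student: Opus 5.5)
The plan is to apply de Rham's theorem (Theorem 2.2) directly, exactly as in the proof of the corollary for \eqref{eq:deLevy}. First I will identify \eqref{eq:s-functional} as an instance of \eqref{func: gene}: the contractions to use are the maps $\phi_0$ and $\phi_1$ of the modified IFS \eqref{eq:s-levy},
\begin{equation*}
\phi_0(z)=\alpha z-\tfrac12,\qquad \phi_1(z)=(1-\alpha)z+\tfrac12,
\end{equation*}
with $\alpha=(1-i)/2$, so that $1-\alpha=(1+i)/2$. Then \eqref{eq:s-functional} reads $G(x)=\phi_0(G(2x))$ for $0\le x<1/2$ and $G(x)=\phi_1(G(2x-1))$ for $1/2\le x\le 1$. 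Since $|\alpha|=|1-\alpha|=1/\sqrt2<1$, both maps are similarity contractions of $\mathbb{C}\cong\mathbb{R}^2$, so the hypotheses of de Rham's theorem are met apart from the compatibility condition.

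Next I will compute the fixed points. The fixed point of $\phi_0$ solves $z(1-\alpha)=-1/2$, giving $Fix(\phi_0)=-\frac{1}{2(1-\alpha)}=-\frac{1}{1+i}=\frac{-1+i}{2}=s$. The fixed point of $\phi_1$ solves $z\alpha=1/2$, giving $Fix(\phi_1)=\frac{1}{2\alpha}=\frac{1}{1-i}=\frac{1+i}{2}$. A quicker route is to use the conjugation $\phi_j=S\circ\psi_j\circ S^{-1}$: this gives $Fix(\phi_j)=S(Fix(\psi_j))$, so the fixed points are $S(0)=s$ and $S(1)=1+s$, which agrees with the direct computation.

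Finally I will check the compatibility condition:
\begin{equation*}
\phi_1(s)=\tfrac{1+i}{2}\cdot\tfrac{-1+i}{2}+\tfrac12=-\tfrac12+\tfrac12=0,
\end{equation*}
\begin{equation*}
\phi_0(1+s)=\tfrac{1-i}{2}\cdot\tfrac{1+i}{2}-\tfrac12=\tfrac12-\tfrac12=0.
\end{equation*}
So $\phi_1(Fix(\phi_0))=\phi_0(Fix(\phi_1))$. Structurally, conjugating by $S$ maps the known equality $\psi_1(0)=\psi_0(1)=\alpha$ to $S(\alpha)=0$. De Rham's theorem then gives existence and uniqueness of a continuous solution $G$.

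There is no real obstacle here. The only care needed is the arithmetic with $\alpha$ and $1-\alpha$, in particular not confusing $s$ with the translation constant in $S$ (they differ by a sign). Along the way I may also note that $G=S\circ f$, since $S\circ f$ visibly satisfies \eqref{eq:s-functional}; uniqueness then identifies $G$ with the shifted parametrization.
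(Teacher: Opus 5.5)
Your proposal is correct and is essentially the paper's proof: you identify \eqref{eq:s-functional} as de Rham's equation with the contractions $\phi_0,\phi_1$, compute $Fix(\phi_0)=\frac{-1+i}{2}$ and $Fix(\phi_1)=\frac{1+i}{2}$, and check $\phi_1(Fix(\phi_0))=0=\phi_0(Fix(\phi_1))$. Your conjugation remark is a valid extra but is not needed: $G=S\circ f$, and $S^{-1}\circ G$ solves \eqref{eq:deLevy} for any continuous solution $G$, so existence and uniqueness could even be transferred directly from the Corollary.
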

\begin{proof}
Every contraction $\phi_i:\mathbb{C} \to \mathbb{C}$ has a unique fixed point $Fix(\phi_i)$ in $\mathbb{C}$. Since $Fix(\phi_0) =\frac{1}{2(\alpha-1)}= \frac{-1+i}{2}$ and $Fix(\phi_1) = \frac{1}{2\alpha}=\frac{1+i}{2}$, observe that 
$$\phi_1(Fix(\phi_0)) = 0 = \phi_0(Fix(\phi_1)).$$

\end{proof}
\bigskip
Now, the main interest is to find an explicit expression of $G(x)$. Observe that from \eqref{eq:s-functional}, we have
\begin{itemize}
\item If $x=0$, $G(0) = \alpha G(0) - 1/2 \implies G(0) = \frac{-1}{2(1-\alpha)} =\frac{-1+i}{2}$.

\item If $x=1$, $G(1) = (1-\alpha) G(1) + 1/2 \implies G(1) =\frac{1}{2\alpha}=\frac{1+i}{2}$.

\item If $x=1/2$, $G(1/2) = (1-\alpha) G(0) + 1/2 \implies G(1/2) =0$.

\item If $x=1/4$, $G(1/4) = \alpha G(1/2) - 1/2 \implies G(1/4)=-1/2$.

\item If $x=3/4$, $G(3/4) = (1-\alpha) G(1/2) + 1/2 \implies G(3/4) =1/2$.
\end{itemize}
Inductively, we can calculate $G(x)$ for all dyadic points $x \in [0,1]$. 

\begin{lemma}
\label{th:shiftedlevyexplicit}
For any dyadic point $x \in (0,1)$; that is, there exists $k\in \mathbb{N}$ such that $\omega_k=1$ and $\omega_n(x)=0$ for $\forall n> k$, $G(x)$ has the following expression. 
\begin{equation}
    \label{explicts-levy}
    G(x) = \frac{1}{2}\sum_{n=1}^{k-1}(-1)^{1-\omega_n(x)}\alpha^{n-1-q(x,n-1)}(1-\alpha)^{q(x,n-1)}
\end{equation}
where $\alpha$ = $(1-i)/2$.
\end{lemma}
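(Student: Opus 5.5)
The plan is to prove \eqref{explicts-levy} by induction on $k$, the position of the last digit $1$ in the binary expansion of the dyadic point $x$, using only the functional equation \eqref{eq:s-functional}. Throughout we use the digit-shift identities: for $x<1/2$ we have $\omega_n(2x)=\omega_{n+1}(x)$, and for $x\ge 1/2$ we have $\omega_n(2x-1)=\omega_{n+1}(x)$. In both cases the trailing-zero convention is preserved, so the shifted point is again dyadic, with last $1$ at position $k-1$.

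\emph{Base case $k=1$.} Then $x=1/2$, the sum in \eqref{explicts-levy} is empty, and we already computed $G(1/2)=0$ from \eqref{eq:s-functional}.

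\emph{Inductive step, $k\ge 2$, first case $\omega_1(x)=0$.} Here $x<1/2$ and $y=2x\in(0,1)$ is dyadic with last $1$ at position $k-1$. Since $\omega_1=0$, we have $q(y,m-2)=q(x,m-1)$ for $m\ge 2$. By \eqref{eq:s-functional} and the induction hypothesis,
\begin{equation*}
G(x)=\alpha G(y)-\tfrac12=\tfrac12\sum_{n=1}^{k-2}(-1)^{1-\omega_{n+1}(x)}\alpha^{n-q(x,n)}(1-\alpha)^{q(x,n)}-\tfrac12 .
\end{equation*}
Reindexing with $m=n+1$ turns the exponent of $\alpha$ into $m-1-q(x,m-1)$, giving the terms $m=2,\dots,k-1$ of \eqref{explicts-levy}. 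The constant $-\tfrac12$ is exactly the $m=1$ term, since $(-1)^{1-0}\alpha^{0}(1-\alpha)^{0}/2=-1/2$.

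\emph{Inductive step, $k\ge 2$, second case $\omega_1(x)=1$.} Here $1/2<x<1$ and $y=2x-1$ is dyadic in $(0,1)$ with last $1$ at position $k-1$. Now $q(x,m-1)=1+q(y,m-2)$ for $m\ge 2$. Multiplying the induction formula for $G(y)$ by $(1-\alpha)$ raises the exponent of $(1-\alpha)$ by one, while the exponent of $\alpha$ satisfies
\begin{equation*}
(m-1)-q(y,m-2)=(m-1)-q(x,m-1)+1-1=m-1-q(x,m-1)
\end{equation*}
after the same reindexing. So these terms match the terms $m=2,\dots,k-1$ of \eqref{explicts-levy}. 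The added constant $+\tfrac12$ is the $m=1$ term, since $\omega_1=1$ gives sign $+1$.

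The only delicate point is this bookkeeping of the exponents under the shift, together with checking that the shifted point stays a dyadic point of $(0,1)$ to which the hypothesis applies. For $k\ge2$ this holds because $y$ still has a digit $1$ at position $k-1\ge 1$ and is not equal to $1$. As a sanity check, the formula reproduces the values $G(1/4)=-1/2$ and $G(3/4)=1/2$ computed above.
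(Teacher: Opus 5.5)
Your proof is correct and follows essentially the paper's argument: induction on the position $k$ of the last $1$, splitting on $\omega_1$, and using the functional equation with $q(2x,n-1)=q(x,n)$, resp.\ $q(2x-1,n-1)=q(x,n)-1$, to reindex the sum. The paper phrases this as $F(x)=-\tfrac12+\alpha F(2x)$, resp.\ $F(x)=\tfrac12+(1-\alpha)F(2x-1)$, and also checks $k=2$ separately, which your $k=1$ base case makes unnecessary; note one cosmetic slip: the left side of your exponent identity in the second case should read $(m-1)-1-q(y,m-2)$, as your middle expression already reflects.
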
 

\begin{proof}[Proof of Lemma \ref{th:shiftedlevyexplicit}]

Let 
\begin{equation*}
F(x) := \frac{1}{2}\sum_{n=1}^{k-1}(-1)^{1-\omega_n(x)}\alpha^{n-1-q(x,n-1)}(1-\alpha)^{q(x,n-1)}.
\end{equation*}

Recall that $G(x)$ is the unique continuous solution of \eqref{eq:s-functional}.

Let $P(k)$ be the statement: 
$$F(x)=G(x) \mbox{ for all } x \mbox{ of the form } x=(0.\omega_1 \cdots .\omega_{k-1} 1)_{2}.$$

First, consider the base case $k=1$. Since $x=(0.1)_{2}=1/2$, it is clear that the statement $P(1)$ holds as  
\begin{align*}
F(1/2)=F((0.1)_{2}) &= \frac{1}{2}\sum_{n=1}^{0}(-1)^{1- \omega_n(x)}\alpha^{n-1-q(x,n-1)}(1-\alpha)^{q(x,n-1)}\\
& = 0 =G(1/2). 
\end{align*}

Second, consider the case $k=2$, that is $x=(0.01)_2=1/4$ or $x=(0.11)_2= 3/4$

\begin{align*}
F(1/4)=F((0.01)_{2}) &= \frac{1}{2}(-1)^{1-0}\cdot \alpha^0(1-\alpha)^0 =-\frac{1}{2} =G(1/4). \\
F(3/4)=F((0.11)_{2}) &= \frac{1}{2}(-1)^{1-1}\cdot \alpha^0(1-\alpha)^0 =\frac{1}{2} =G(3/4). 
\end{align*}
Therefore the statement P(2) holds.

We continue the proof by induction. Let $l \geq 2$ and assume that the statement $P(l)$ is true; that is,  
$$F(x) = G(x) \mbox { for all } x \mbox{ of the form } x=(0.\omega_1\omega_2\omega_3 \dots \omega_{l-1}1)_2.$$ 
We want to prove $P(l+1)$.

\medskip

Consider $x$ having the form $x=(0.\omega_1\omega_2\omega_3 \dots \omega_{l}1)_2$.
Notice that for $0\leq x<1/2$, $\omega_1=0$ and $q(2x,n-1) = q(x,n)$ and for $1/2 \leq x \leq 1$, $\omega_1=1$ and $q(2x-1,n-1) = q(x,n)-1$.\\
If $0\leq x< 1/2$,
\begin{align*}
    F(x) &= F(0.0 \omega_2\dots\omega_l1)_2\\
    &= \frac{1}{2}\sum_{n=1}^l(-1)^{1-\omega_n(x)}\alpha^{n-1-q(x,n-1)}(1-\alpha)^{q(x,n-1)}\\
    &=-\frac{1}{2}+\frac{1}{2}\sum_{n=2}^l(-1)^{1-\omega_n(x)}\alpha^{n-1-q(x,n-1)}(1-\alpha)^{q(x,n-1)}\\
    &=-\frac{1}{2}+\frac{1}{2}\sum_{n=1}^{l-1}(-1)^{1-\omega_{n+1}(x)}\alpha^{n-q(x,n)}(1-\alpha)^{q(x,n)}\\
    &=-\frac{1}{2}+\frac{\alpha}{2}\sum_{n=1}^{l-1}(-1)^{1-\omega_{n}(2x)}\alpha^{n-1-q(2x,n-1)}(1-\alpha)^{q(2x,n-1)}\\
    &= -\frac{1}{2}+\alpha F(2x)
\end{align*}
Then observe that $2x = (0.\omega2\omega3\omega4\dots\omega_{l-1}1)$.
Therefore $P(l)$ holds by assumption,
\begin{align*}
   F(x) &= -\frac{1}{2}+\alpha G(2x)= G(x)
\end{align*}
If $1/2 \leq x\leq1$,
\begin{align*}
    F(x) &= F(0.1 \omega_2\dots\omega_l1)_2\\
    &= \frac{1}{2}\sum_{n=1}^l(-1)^{1-\omega_n(x)}\alpha^{n-1-q(x,n-1)}(1-\alpha)^{q(x,n-1)}\\
    &=\frac{1}{2}+\frac{1}{2}\sum_{n=2}^l(-1)^{1-\omega_n(x)}\alpha^{n-1-q(x,n-1)}(1-\alpha)^{q(x,n-1)}\\
    &=\frac{1}{2}+\frac{1}{2}\sum_{n=1}^{l-1}(-1)^{1-\omega_{n+1}(x)}\alpha^{n-q(x,n)}(1-\alpha)^{q(x,n)}\\
    &=\frac{1}{2} +\frac{1}{2}\sum_{n=1}^{l-1}(-1)^{1-{\omega_n(2x-1)}}\alpha^{n-q(2x-1,n-1)-1}(1-\alpha)^{q(2x-1,n-1)+1}\\
    &=\frac{1}{2}+\frac{(1-\alpha)}{2}\sum_{n=1}^{l-1}(-1)^{1-\omega_n(2x-1)}\alpha^{n-1-q(2x-1,n-1)}(1-\alpha)^{q(2x-1,n-1)}\\
    &=\frac{1}{2}+(1-\alpha)F(2x-1)
\end{align*}
Again, observe that $2x = (0.\omega2\omega3\omega4\dots\omega_{l-1}1)$.
Therefore $P(l)$ holds by assumption, we have 
\begin{align*}
   F(x) &= \frac{1}{2}+(1-\alpha) G(2x-1)= G(x)
\end{align*}
Therefore, we conclude that the statement $P(k)$ is true for all $k \in \mathbb{N}$.
\end {proof}
\bigskip
Recall $\exists k\in \mathbb{N} \text { such that } \omega_k=1$ and $\omega_n=0$ for $\forall n> k$, if $x$ is a dyadic point. Therefore, $q(x,k-1)+1 = q(x,k)  =q(x, k+l)$ for $\forall l \in \NN$. Let
    \begin{equation*}
        H(x): = \frac{1}{2} \sum_{n=k}^{\infty}(-1)^{1-\omega_n}\alpha^{n-1-q(x,n-1)}(1-\alpha)^{q(x,n-1)}
    \end{equation*} 
    \begin{align*}
       H(x)
         &=\frac{1}{2}\left((-1)^0 \alpha^{k-1}\left(\frac{1-\alpha}{\alpha}\right)^{q(x,k-1)}+\sum_{n=k+1}^{\infty}(-1)^{1-\omega_n}\alpha^{n-1}\left(\frac{1-\alpha}{\alpha}\right)^{q(x,n-1)}\right)\\
         &=\frac{1}{2}\left( \alpha^{k-1}\left(\frac{1-\alpha}{\alpha}\right)^{q(x,k-1)}+(-1)^{1}\left(\frac{1-\alpha}{\alpha}\right)^{q(x,k-1)+1}\sum_{n=k+1}^{\infty}\alpha^{n-1}\right)\\
         &=\frac{1}{2}\left( \alpha^{k-1}\left(\frac{1-\alpha}{\alpha}\right)^{q(x,k-1)}-\left(\frac{1-\alpha}{\alpha}\right)^{q(x,k-1)+1} \left( \frac{\alpha^k}{1-\alpha}\right)\right)\\
         &=\frac{1}{2}\left(\alpha^{k-1}\right) \left(\frac{1-\alpha}{\alpha}\right)^{q(x,k-1)}\left(1-\left(\frac{1-\alpha}{1}\right)\left(\frac{1}{1-\alpha}\right)\right)\\
         &=0
    \end{align*}
Therefore for a dyadic point $x\in[0,1]$
\begin{equation*}
    G(x) = \frac{1}{2}\sum_{n=1}^{\infty}(-1)^{1-\omega_n(x)}\alpha^{n-1-q(x,n-1)}(1-\alpha)^{q(x,n-1)}.
\end{equation*}
Since the set of dyadic points is dense in [0,1] and $G(x)$ is the unique continuous solution, we have the following theorem.
\begin{theorem}
    \label{Main-Result}
    The unique continuous solution of G(x) of \eqref{eq:s-functional} can be expressed as follows.
    \begin{equation}
    G(x) = \frac{1}{2} \sum^\infty_{n=1} (-1)^{1-\omega_n(x)} \alpha^{n-1-q(x,n-1)}(1-\alpha)^{q(x,n-1)} \qquad x \in [0,1] 
    \end{equation}
    Where $\alpha = (1-i)/2$.
\end{theorem}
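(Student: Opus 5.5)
Write $\widetilde G(x)$ for the series on the right-hand side of Theorem~\ref{Main-Result}. The plan is to show directly that $\widetilde G$ satisfies \eqref{eq:s-functional} and is continuous on $[0,1]$, so that $\widetilde G=G$ by the uniqueness in the preceding lemma. Simply saying ``dyadic points are dense'' is not enough, because we do not yet know that the series is continuous.

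\textbf{Step 1: convergence.} With $\alpha=(1-i)/2$ we have $|\alpha|=|1-\alpha|=2^{-1/2}$. Hence the $n$-th term has modulus $\tfrac12\, 2^{-(n-1)/2}$, independently of $x$. The series therefore converges absolutely and uniformly in the digit sequence, and the tail beyond index $N$ is bounded by $C\,2^{-N/2}$.

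\textbf{Step 2: the functional equation.} This repeats the index-shift computation in the proof of Lemma~\ref{th:shiftedlevyexplicit}, now applied to the full infinite sum. If $\omega_1=0$, then $\omega_{n+1}(x)=\omega_n(2x)$ and $q(x,n)=q(2x,n-1)$. Splitting off the $n=1$ term, which equals $-\tfrac12$, gives $\widetilde G(x)=-\tfrac12+\alpha\widetilde G(2x)$. If $\omega_1=1$, the first term is $\tfrac12$ and the shift produces a factor $(1-\alpha)$, so $\widetilde G(x)=\tfrac12+(1-\alpha)\widetilde G(2x-1)$.

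Two boundary cases need care. At $x=1$ we use the convention that all digits are $1$, so $2x-1=1$ has the same digits and the identity is consistent. At $x=1/2$ we have $2x-1=0$, whose digits are all zero, and this matches the trailing-zero convention.

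\textbf{Step 3: continuity, the main obstacle.} At a non-dyadic $x$, nearby points share the first $N$ digits. By Step 1 their series values differ by $O(2^{-N/2})$, so $\widetilde G$ is continuous there. At a dyadic point we need the series evaluated on the other binary expansion, $\omega_1\cdots\omega_{k-1}0111\cdots$, to give the same value. Using this expansion, one computes its tail from index $k$ as the paper does for $H(x)$.

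The term at $n=k$ is $-\tfrac12\alpha^{k-1}r^{q}$, where $r=(1-\alpha)/\alpha=i$ and $q=q(x,k-1)$. The remaining terms give $\tfrac12 r^{q}\sum_{n>k}\alpha^{n-1}r^{n-k}$. Since $\alpha r=1-\alpha$, this sum equals $\tfrac12\alpha^{k-1}r^{q}\,\frac{1-\alpha}{\alpha}$, and the tail totals zero, matching $H(x)=0$.

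Therefore both expansions yield the same value, namely the truncated sum $F(x)$ of Lemma~\ref{th:shiftedlevyexplicit}. Left limits approach via the all-ones tail and right limits via the zero tail, so Step 1 gives two-sided continuity at dyadic points. The same check applies at the endpoints $0$ and $1$, where the values are $G(0)=\frac{-1+i}{2}$ and $G(1)=\frac{1+i}{2}$.

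\textbf{Conclusion.} $\widetilde G$ is a continuous solution of \eqref{eq:s-functional}, so $\widetilde G=G$. As a consistency check, this also agrees with Lemma~\ref{th:shiftedlevyexplicit} on the dyadic points.
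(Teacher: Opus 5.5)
Your proof is sound in structure and takes a different route from the paper's, but the trailing-ones computation in Step 3 contains an arithmetic error (fixed below).

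\textbf{Comparison with the paper.} The paper never treats the series as a function in its own right. It proves the finite-sum formula $F$ at dyadic points by induction on the position $k$ of the last $1$, shows that the trailing-zero tail $H(x)$ vanishes, and then appeals to density of the dyadics. You instead verify \eqref{eq:s-functional} for the full infinite series by a single index shift (the paper's inductive step, with no induction needed), prove continuity of the series directly, and conclude by uniqueness. In your version Lemma~\ref{th:shiftedlevyexplicit} becomes a consequence rather than an ingredient.

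Your objection to the bare density argument is fair. However, the paper's route can be repaired more cheaply than by proving full continuity. Let $x_N$ be the truncation of a non-dyadic $x$ after $N$ digits. Then $G(x_N)=\widetilde G(x_N)$, and $G(x_N)\to G(x)$ by continuity of $G$. Also $|\widetilde G(x_N)-\widetilde G(x)|=O(2^{-N/2})$ by your Step 1, since $x_N$ and $x$ share their first $N$ digits. This repair needs neither the trailing-ones expansion nor continuity of $\widetilde G$. Your route pays for that extra computation, but it is self-contained and does not depend on the dyadic lemma.

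\textbf{The error in Step 3.} Take the expansion $\omega_1\cdots\omega_{k-1}0111\cdots$ and $n>k$. The number of $1$'s among the first $n-1$ digits is $q+(n-1-k)$, not $q+(n-k)$. So the $n$-th term is $\tfrac12\alpha^{n-1}r^{q+n-1-k}$, and
\[
\tfrac12 r^{q}\sum_{n>k}\alpha^{n-1}r^{n-1-k}=\tfrac12 r^{q}\alpha^{k}\sum_{m\ge 0}(1-\alpha)^{m}=\tfrac12\alpha^{k-1}r^{q}.
\]
This cancels the $n=k$ term $-\tfrac12\alpha^{k-1}r^{q}$ exactly. With your exponent, the sum is $\tfrac12\alpha^{k-1}r^{q}\cdot\frac{1-\alpha}{\alpha}=\tfrac{i}{2}\alpha^{k-1}r^{q}$. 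The tail would then be $\tfrac{i-1}{2}\alpha^{k-1}r^{q}\neq 0$, so ``the tail totals zero'' does not follow from what you wrote. With the exponent corrected, Step 3 and the rest of your argument go through.
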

\bigskip

Finally, recall our main question, Is any point $z$ of $L_s=G([0,1])$  also characterized by a sequence satisfying the GRC?

Using \eqref{kiko-levy-2} as a model, we have 
\begin{align*}
    G(x) &= \frac{1}{2} \sum^\infty_{n=1} (-1)^{1-\omega_n(x)} \alpha^{n-1-q(x,n-1)}(1-\alpha)^{q(x,n-1)} \\
    &=\frac{1}{2\alpha}\sum^\infty_{n=1} (-1)^{1-\omega_n(x)} \alpha^{n}(i)^{q(x,n-1)} \\
    &=(1-\alpha)\sum^\infty_{n=1} \alpha^n (-1)^{1-\omega_n(x)}(i)^{q(x,n-1)}. 
    \end{align*}
    
Let $\gamma_n:=(-1)^{1-\omega_n(x)}(i)^{q(x,n-1)}$. Observe that $\gamma_n \in \{1,-1, i,-i\}$, which does not include $0$. $(\gamma_n)$ moves on the unit circle, but the movement varies depending on $x \in [0,1]$.

For example, consider the binary expansion of $x=(0.100110110)_2$. Then the corresponding digit sequence $(\gamma_n)$ is 
$(\gamma_n)=(1, -i, -i, i, -1, i, -i, 1, -i)$
It is clear that any sequence $(\gamma_n) \in \{1,-1, i,-i\}^{\mathbb{N}}$ does not satisfy the GRC. 

Loosely speaking, we can say that the sequence $(\gamma_n)\in \{1,-1, i,-i\}^{\mathbb{N}}$ is a sequence on the unit circle and with subsequent terms which must perform one of the following actions:  
\begin{itemize}
\item Stay in place 
\item Move 1 step or 2 steps forward (counterclockwise) on the unit circle
\item move 1 step backward (clockwise) on the unit circle
\end{itemize}

Initially, $(\gamma_n)$ is seemingly much more complicated than the sequence $(\xi_n)$. In fact, it is rather difficult to find the similarity between the revolving structures of the sequences $(\xi_n)$ and  $(\gamma_n)$. 

However, from the viewpoint of the directed graph, the similarity is clear. We see that the sequence $(\gamma_n)\in \{1,-1, i,-i\}^{\mathbb{N}}$ follows the directed-graph $\GG_2$ shown in Figure~\eqref{fig:map2}.

\begin{figure}[H]
  	\begin{center}
	\epsfig{file=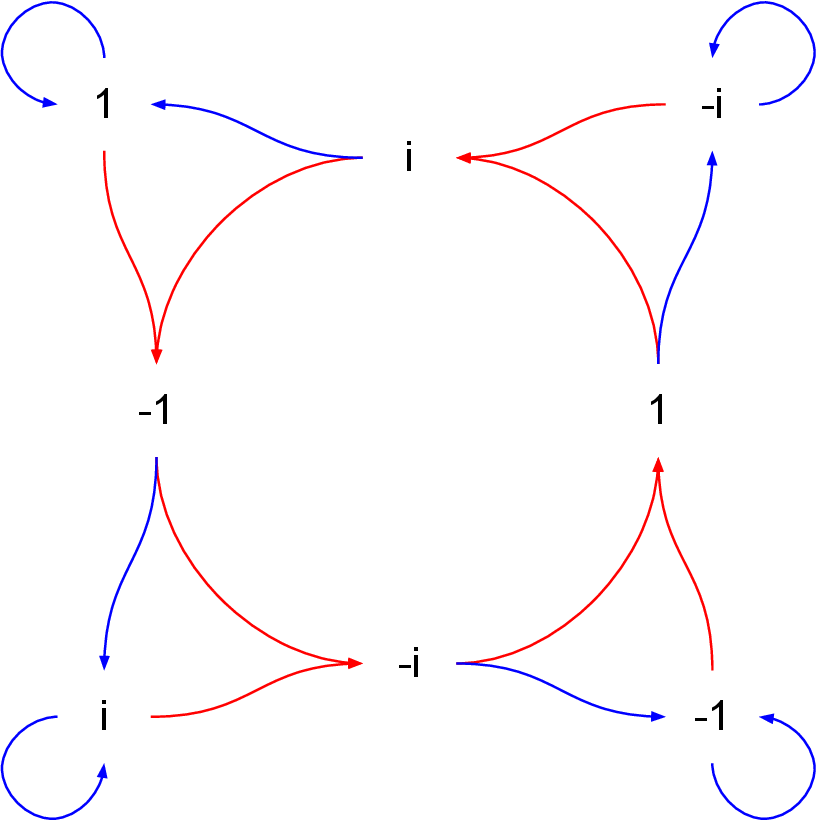, height=1.75in, width=.51\textwidth} 
	\end{center}
    \caption{The directed-graph $\GG_2$, that determines the points of the Shifted L\'evy's Dragon Curve $L_s$}
\label{fig:map2}
\end{figure}

Another use of this directed graph it to generate the digit sequence $(\gamma_n)$ directly from the binary sequence of $x$ without computation. The rules of construction are as follows: If $\omega_n = 0$ then the sequence follows the blue arrow from the previous digit; If $\omega_n=1$ then the sequence follows the red arrow from the previous digit. 

Observe that $\gamma_1=-1$ if $\omega_1=0$ and $\gamma_1=1$ if $\omega_1=1$. We interpret $\gamma_1$ as specifying the initial position in Figure~\ref{fig:map2}: The value $-1$ corresponds to the point labeled ``$-1$'' in the lower right region, and the value $1$ corresponds to the point labeled ``$1$'' on the right of the inner circle.

For example, consider the binary expansion of $x=(0.01011001101)_2$. Then the corresponding digit sequence $(\gamma_n)$ starts with $\gamma_1 = -1$ because $\omega_1=0$. Then, since $\omega_2=1$, following the red arrow, we find $\gamma_2=1$. Since $\omega_3=0$, following the blue arrow, we find $\gamma_3=-i$. 
Continue this process, then we obtain:
$(\gamma_n)=(-1,1, -i, i, -1, i, i,-i, 1, -i,i)$.

\medskip

Since $L_s=G([0,1])$, we have 
\begin{equation}
\label{eq: L2 expression}
  L_s=\left\{ (1-\alpha) \sum_{n=1}^{\infty} \gamma_{n}\alpha^{n}: (\gamma_{n} ) \mbox{ follows the directed-graph $\GG_2$}\} \right\}, 
\end{equation}

Compare with Figure~\ref{fig:map1} and Figure~\ref{fig:map2}. Observe that the directed-graph $\GG_2$ for $L_s$ looks identical to the directed-graph $\GG_1$ for $L$, with only the labels of the nodes being different. Therefore, we can say that both the original L\'evy dragon curve $L$ and 
the s-shifted L\'evy's dragon curve $L_s$ have the similar revolving structure. 

\section{Future work}

In this paper, we investigated a particular shifted L\'evy's dragon curve $L_s$ and introduced the directed-graph $\GG_2$, which characterizes $L_s$. 

A natural next step is to examine the behavior under more general transformations. Specifically, let $\lambda$ be an arbitrary scaling factor and $\tau$ an arbitrary translation. Consider the set obtained by scaling the original L\'evy's dragon curve $L$ 
by $\tau$ and then scale by $\lambda$. Call the resulting set $L_{\lambda,\tau}$. It is straightforward to verify that $L_{\lambda,\tau}$ is the unique attractor satisfying  $L_{\lambda,\tau}=g_0(L_{\lambda,\tau})\cup g_1(L_{\lambda,\tau})$, 
where the IFS is given by  
\begin{equation*}
\label{eq:general shifted levy}
\begin{cases}
 g_0(z)= (\frac{1-i}{2}) z + (\frac{1+i}{2})\tau, \\
 g_1(z)=(\frac{1+i}{2}) z+ (\frac{1-i}{2}) (\tau + \lambda).
\end{cases}
\end{equation*}

Furthermore, this naturally raises the question of whether the attractor $L_{\lambda,\tau}$ admits an explicit representation analogous to \eqref{eq: L2 expression}. In particular, one may ask whether $L_{\lambda,\tau}$ can be described via a modified sequence governed by a directed graph
$\GG_3$, whose structure is analogous to those of $\GG_1$ and $\GG_2$.

\section*{Acknowledgments}
The author would like to thank their thesis advisor, Dr. Kiko Kawamura, for helpful feedback and revisions. The author also thanks Dr. R. Daniel Prokaj and Miguel Gonzalez-Carriedo for their assistance in the creation of the central theorem. Finally, the author thanks Soham Mangesh Kale and Anshika Alok Pandey for their 3D rendering of Figure~\ref{fig:3D-fractal}. 


\end{document}